\documentclass[11pt]{amsart}
\usepackage[left=1in,right=1in,top=1in,bottom=1in]{geometry}               
\usepackage{graphicx, enumerate, amsmath, color, tikz}
\usepackage[pdf]{pstricks}
\usepackage{amssymb}
\usepackage{epstopdf}
\DeclareGraphicsRule{.tif}{png}{.png}{`convert #1 `dirname #1`/`basename #1 .tif`.png}
\newtheorem{theorem}{Theorem}[section]

\newtheorem{lemma}[theorem]{Lemma}
\newtheorem{corollary}[theorem]{Corollary}

\theoremstyle{definition}

%\usetikzlibrary{calc,intersections,through,backgrounds,shapes,patterns}
%\tikzstyle{p}+=[fill=black, circle, minimum width = 1pt, inner sep = 1pt]

\title{Monochromatic cycle partitions in local edge colourings}
\author{David Conlon}
\address{Mathematical Institute\\
University of Oxford\\
Andrew Wiles Building\\
Radcliffe Observatory Quarter\\
Woodstock Road\\
Oxford OX2 6GG\\
United Kingdom}
\email{david.conlon@maths.ox.ac.uk}

\author{Maya Stein}
\address{Centro de Modelamiento Matem\'atico\\
Universidad de Chile\\
Beauchef 851\\
Piso 7\\
Santiago Centro\\
RM\\
Chile}
\email{mstein@dim.uchile.cl}

\thanks{The first author is supported by a Royal Society University
Research Fellowship and the second author is supported by the 
Fondecyt grants 11090141 and 1140766.}

\date{}                                          

\begin{document}
\maketitle

\begin{abstract}
An edge colouring of a graph is said to be an $r$-local colouring if the edges incident to any vertex are coloured with at most $r$ colours. Generalising a result of Bessy and Thomass\'e, we prove that the vertex set of any $2$-locally coloured complete graph may be partitioned into two disjoint monochromatic cycles of different colours. Moreover, for any natural number $r$, we show that the vertex set of any $r$-locally coloured complete graph may be partitioned into $O(r^2 \log r)$ disjoint monochromatic cycles. This generalises a result of Erd\H{o}s, Gy\'arf\'as and Pyber.
\end{abstract}

\section{Introduction}

A well-known result of Erd\H{o}s, Gy\'arf\'as and Pyber~\cite{EGP91} says that there exists a constant $c(r)$, depending only on $r$, such that if the edges of the complete graph $K_n$ have been coloured with $r$ colours, then the vertex set of $K_n$ may be partitioned into at most $c(r)$ disjoint monochromatic cycles, where we allow single vertices and edges to be cycles. Moreover, they conjectured that this result should hold with $c(r) = r$. 

The $r = 2$ case of this conjecture, attributed (see \cite{Ay79}) to Lehel, is slightly more specific, asking that the vertex set  be partitioned into two disjoint cycles of different colours,  where we now consider the empty set to be a cycle. This conjecture was proved, for $n$ sufficiently large, by \L uczak, R\"odl and Szemer\'edi \cite{LRS98} and later by Allen \cite{A08}, though starting from a much smaller value of $n$. For all $n$, the conjecture was finally resolved by Bessy and Thomass\'e \cite{BT10}.

For $r \geq 3$, the conjecture was shown to be false by Pokrovskiy \cite{P14}. However, for the case $r = 3$, there are some partial results showing that the conjecture is still very close to being true~\cite{GRSS11, P14}. In general, the best known upper bound on $c(r)$, proved by Gy\'arf\'as, Ruszink\'o, S\'ark\"ozy and Szemer\'edi~\cite{GRSS06}, is $O(r \log r)$ and, despite Pokrovskiy's counterexample, it seems likely that an approximate version of the original conjecture remains true.

We consider a generalisation of this monochromatic cycle partition question to graphs with locally bounded colourings. We say that an edge colouring of a graph is an {\it $r$-local colouring} if the edges incident to any vertex are coloured with at most $r$ colours. Note that we do not restrict the total number of colours. Somewhat surprisingly, we prove that even for local colourings, a variant of the Erd\H{o}s--Gy\'arf\'as--Pyber result holds. 

\begin{theorem} \label{thm:rcolours}
The vertex set of any $r$-locally coloured complete graph  may be partitioned into $O(r^2 \log r)$ disjoint monochromatic cycles.
\end{theorem}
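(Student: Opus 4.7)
The strategy is to mimic the proof of the Gy\'arf\'as--Ruszink\'o--S\'ark\"ozy--Szemer\'edi (GRSS) bound of $O(r \log r)$ for $r$-\emph{globally} coloured complete graphs, paying an extra factor of $r$ to handle the local setting. The starting point is the following structural observation. For each colour $c$, write $V_c$ for the set of vertices incident to at least one edge of colour $c$; the $r$-local condition forces $\sum_c |V_c| \le rn$, so at most $r^2$ colours $c$ can satisfy $|V_c| \ge n/r$. I call these the \emph{active} colours. Moreover, since each vertex has at most $r$ colours on its incident edges, at every vertex some colour accounts for $\ge (n-1)/r$ of these edges, and such a colour is automatically active. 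Hence every vertex has a \emph{dominant active colour}.

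I would next apply the Szemer\'edi Regularity Lemma to pass to a reduced graph $R$ of bounded size, inheriting (up to a small error) the $r$-local colouring, and then adapt the GRSS connected-matching construction to $R$ using only the $O(r^2)$ active colours. Since every vertex of $R$ still has a dominant active colour, the usual averaging and Gy\'arf\'as-type connectivity arguments give, at each iterative step, a large monochromatic connected matching (and hence, after lifting through the regular pairs, a large monochromatic cycle in $K_n$) of some active colour. Treating the active colours as an effective $r' = O(r^2)$ colouring and running the GRSS iteration should produce $O(r' \log r') = O(r^2 \log r)$ monochromatic cycles covering all of $V(K_n)$ except for a small exceptional set handled by singleton cycles within the same asymptotic bound.

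The main obstacle, I expect, is controlling the \emph{inactive} colour edges during the GRSS-style iteration. Inactive colours can carry many edges in total: the naive bound $\sum_{c \text{ inactive}} \binom{|V_c|}{2} \le n^2/2$ permits up to half of all edges to come from inactive colours, so they cannot simply be discarded as sparse, and they may locally disrupt the active-colour structure. Overcoming this requires a refined version of the connected-matching argument in which inactive edges are either absorbed into active-colour cycles via small local modifications, or else they get isolated in regions whose vertices are picked up by low-cost singleton or short cycles. This technical reshaping of the GRSS inductive step to respect the active-colour restriction is, to my mind, the crux of the argument, and it is precisely where the extra factor of $r$ in the bound is accrued.
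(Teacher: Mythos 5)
Your structural observations are correct as far as they go: the bound $\sum_c |V_c| \le rn$ does limit the number of ``active'' colours to $r^2$, and every vertex does have a dominant active colour. But the proposal has a genuine gap, and you have in fact located it yourself: the entire difficulty of the theorem is concentrated in the step you describe as ``the crux'' and then leave unresolved. Inactive colours can account for up to half of all edges (your own estimate shows this), so the reduced graph is not an $O(r^2)$-colouring of a complete graph in any usable sense; the GRSS connected-matching machinery is built for a bounded number of colours covering \emph{all} edges, and no mechanism is offered for ``absorbing'' or ``isolating'' the inactive edges. A second concrete problem is the treatment of the leftover: the exceptional set produced by the Regularity Lemma and by the iterative removal of connected matchings has size linear in $n$ (of order $\epsilon n$), so it cannot be ``handled by singleton cycles within the same asymptotic bound''; in both Erd\H{o}s--Gy\'arf\'as--Pyber and GRSS this leftover is dealt with by a separate absorption structure set aside at the start, which is absent from your plan.

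For comparison, the paper's proof avoids regularity and connected matchings entirely. It first sets aside an absorber: a monochromatic triangle cycle $T_k$ with $k = \Omega(n/r^{4r})$, found by transferring the Erd\H{o}s--Gy\'arf\'as--Pyber bound on $R_r(T_k)$ to the local setting via the Truszczynski--Tuza inequality $R_{r-loc}(H) \le e^r R_r(H)$. It then iterates an elementary cycle-removal step: a density argument (Erd\H{o}s--Gallai plus a local Ramsey counting lemma, essentially the same counting as your $\sum_c |V_c| \le rn$ observation) gives a monochromatic cycle on a $1/2r$ fraction of any $r$-locally coloured complete graph, and $O(r^2\log r)$ iterations shrink the uncovered part below $|A|/r^{r+3}$. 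Finally, a bespoke bipartite ``patching'' lemma for local colourings covers this small remainder with at most $r^2$ cycles whose other endpoints lie in the removable vertices of $T_k$, and the triangle cycle closes up over whatever was taken from it. If you want to pursue your route, the place to start is a local-colouring analogue of the GRSS single-step lemma (a monochromatic cycle covering a constant fraction of the vertices, rather than a $1/2r$ fraction), which would be the natural path to improving the bound to $O(r\log r)$; but as written the proposal does not constitute a proof.
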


For $r = 2$, we have the following more precise theorem, which directly generalises the result of Bessy and Thomass\'e.

\begin{theorem} \label{thm:2colours}
The vertex set of any $2$-locally coloured complete graph may be partitioned into two disjoint monochromatic cycles of different colours.
\end{theorem}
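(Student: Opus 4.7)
For each vertex $v$ let $\pi(v)$ denote the set of colors appearing on edges incident to $v$; by hypothesis $|\pi(v)|\le 2$, and every edge $uv$ has color in $\pi(u)\cap\pi(v)$, so the family of palettes is pairwise intersecting. A short Helly-style analysis of pairwise intersecting sets of size at most $2$ shows that we are in one of two regimes: (i) some color $a$ lies in every palette, or (ii) exactly three palettes are realized and they form a triangle $\{x,y\},\{y,z\},\{x,z\}$, inducing a partition $V=V_1\cup V_2\cup V_3$ into nonempty classes in which all edges between $V_i$ and $V_j$ have the unique color shared by their palettes and each $V_i$ is internally $2$-colored.

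In case (i), the plan is to reduce directly to Bessy--Thomass\'e. Define an auxiliary $2$-coloring $\chi'$ on $K_n$ by keeping every $a$-edge as $a$ and recoloring every other edge by a single new symbol $\star$. Bessy--Thomass\'e produces disjoint cycles $C_a$ and $C_\star$ partitioning $V$, of distinct $\chi'$-colors. The cycle $C_a$ is manifestly monochromatic in $\chi$. For $C_\star$, any two consecutive edges $uv,vw$ are both non-$a$; since $v$'s palette contains at most one non-$a$ color $b_v$, both edges have color $b_v$. Walking around the cycle forces all its edges to share the same color, so $C_\star$ is also monochromatic in $\chi$ and has a color different from $a$ (degenerate cycles with at most one vertex may be assigned any color).

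Case (ii) is the main obstacle. The first observation is that the induced complete graph on $V_2\cup V_3$ already falls into case (i), with common color $y$; applying the argument above inside $V_2\cup V_3$ yields a monochromatic $y$-cycle $C_y$ together with a second monochromatic cycle $C_o$. Since every non-$y$ edge lies inside $V_2$ or inside $V_3$, the cycle $C_o$ is forced to lie entirely in $V_2$ (as an $x$-cycle) or entirely in $V_3$ (as a $z$-cycle). It then remains to absorb $V_1$ into this pair. Because $V_1$ has no $y$-edges, every vertex of $V_1$ must be incorporated into $C_o$, and the complete monochromatic bipartite connection between $V_1$ and $V(C_o)$ (all $x$ if $C_o\subseteq V_2$, all $z$ if $C_o\subseteq V_3$) is the tool that makes this possible.

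The hardest step, and the one I expect to require the most care, is carrying out this absorption without creating a third monochromatic cycle. When $|V(C_o)|\ge |V_1|$ one can simply insert each vertex of $V_1$ between two consecutive vertices of $C_o$ through the bipartite edges. Otherwise, I would apply Bessy--Thomass\'e internally to $V_1$ (which is $2$-colored in $\{x,z\}$) to split $V_1=U_x\cup U_z$ into an $x$-cycle and a $z$-cycle, merge the cycle of the same color as $C_o$ into $C_o$ along the bipartite edges, and then argue that the remaining internal cycle of $V_1$ can be merged with $C_y$ by swapping out a short monochromatic segment from $C_y$ and rerouting it through a $V_3$- or $V_2$-bridge. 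One can also appeal symmetrically to the analogous decompositions of $V_1\cup V_2$ (common color $x$) or $V_1\cup V_3$ (common color $z$) and choose the most favourable one; verifying that at least one of these three viewpoints always yields a valid two-cycle partition is the decisive combinatorial point of the proof.
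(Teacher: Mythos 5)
Your reduction to two cases and your treatment of case (i) match the paper exactly: merging all non-$a$ colours into a single new colour, invoking Bessy--Thomass\'e, and observing that the second cycle is forced to be monochromatic because each vertex sees at most one non-$a$ colour is precisely the paper's Lemma~\ref{lem:1coverall}. The structural description in case (ii) --- three palette classes forming a triangle, with the complete bipartite graph between any two classes monochromatic in their common colour --- is also the configuration the paper isolates (Figure~\ref{fig:2colours}).

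The gap is in your absorption step for case (ii), and it is not merely a matter of missing care: the strategy cannot work in general. After applying case (i) inside $V_2\cup V_3$ you have no control over $C_o$; Bessy--Thomass\'e may return $C_o$ empty or a single vertex even when $V_1$ is arbitrarily large, so the insertion argument is unavailable. Your fallback splits $V_1$ into an $x$-cycle and a $z$-cycle, merges one into $C_o$, and proposes to merge the other into $C_y$. But that leftover cycle has colour $x$ or $z$ while $C_y$ has colour $y$, and no vertex of $V_1$ is incident to any $y$-edge, so it can never be attached to $C_y$ monochromatically; keeping it separate yields three cycles, which the theorem forbids. In the bad configurations one must discard the cycles produced by the black box on $V_2\cup V_3$ and build both cycles from scratch, threading each of two classes through a carefully sized portion of a third. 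This is what the paper does: it uses Gy\'arf\'as's result that each class splits into two monochromatic paths of the two available colours, proves the gluing lemmas (Lemma~\ref{lem:pathsBIP} and Lemma~\ref{lem:pathsTRI}), and then runs a counting argument on the six path orders (inequalities \eqref{length}--\eqref{V12V13} together with the shortening/augmenting analysis in cases (a) and (b)) to show that some assignment of paths to cycles always succeeds. That counting argument is the decisive content of the proof and has no counterpart in your proposal.
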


We will prove Theorem~\ref{thm:rcolours} in the next section by building on ideas of Erd\H{o}s, Gy\'arf\'as and Pyber and using results on local Ramsey numbers.  In Section~\ref{sec:2colours}, we will prove Theorem \ref{thm:2colours} using the result of Bessy and Thomass\'e as a black box. We conclude with some further remarks. In particular, we will discuss a further extension of Theorem~\ref{thm:2colours} to $2$-mean colourings.

\section{$r$-local colourings} \label{sec:rcolours}

In this section, we will prove Theorem~\ref{thm:rcolours} using a proof strategy similar to that of Erd\H{o}s, Gy\'arf\'as and Pyber but with several elements added which are specific to local colourings. The first ingredient needed by Erd\H{o}s, Gy\'arf\'as and Pyber is a result which says that if we have an unbalanced bipartite graph whose edges have been $r$-coloured then all of the vertices in the smaller set can be covered by a bounded number of cycles in terms of $r$. We will begin by proving an analogue of this result for local colourings.

We will need the following elementary lemma due to P\'osa (see Problem 8.3 in \cite{L07}).

\begin{lemma} \label{lem:Posa}
If a graph has independence number $\alpha$, then the vertex set of the graph may be partitioned into at most $\alpha$ disjoint cycles.
\end{lemma}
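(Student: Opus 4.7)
The natural strategy is an extremal argument. Take a partition of $V(G)$ into cycles (in the degenerate sense, with singletons and edges counting as cycles) having the minimum number of parts, say $k$, and aim to show $k\le\alpha$. We argue by contradiction, assuming $k\ge\alpha+1$. The plan is to pick a representative $v_i\in C_i$ from each part; if some choice of representatives forms an independent set in $G$, then $\alpha(G)\ge k$, a direct contradiction. So we may assume instead that, no matter how we choose the representatives, two of them, say $v_i$ and $v_j$ with $i\ne j$, are forced to be adjacent in $G$. We will then use such an adjacency to replace $C_i$ and $C_j$ in the partition by a single degenerate cycle on $V(C_i)\cup V(C_j)$, producing a partition with $k-1$ parts and contradicting the minimality of $k$.

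The merging step is easy when both $C_i$ and $C_j$ are small: two adjacent singletons combine into an edge; a singleton $\{u\}$ whose vertex is adjacent to both endpoints of an edge-part $\{vw\}$ combines into the triangle $uvw$; two edge-parts with a pair of parallel crossing edges form a $4$-cycle; and so on. In each of these cases the two parts fuse into a single admissible cycle of our degenerate type.

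The main obstacle is the merging step when $C_i$ or $C_j$ has length at least three, since a single bridge edge does not suffice to combine two cycles into one longer cycle. To handle this I would strengthen the extremal choice --- for instance, among all minimum cycle partitions, further minimize the number of trivial parts (singletons and edges) --- and exploit the freedom to rotate the representative around each cycle. A careful case analysis should then show that in this refined extremal partition, a forced cross-adjacency between two parts necessarily comes together with an additional edge that realizes either a direct insertion of a vertex into a cycle (using two consecutive bridge-neighbors), a crossing-edge pair that splices two cycles into one, or a swap that strictly improves the secondary objective. Any of these outcomes contradicts extremality, closing the argument. Making this case analysis go through cleanly, especially the cycle-to-cycle merging, is the heart of the proof.
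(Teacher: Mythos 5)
There is a genuine gap, and you have identified it yourself: the cycle-to-cycle merging step is not just ``the heart of the proof'' left to a careful case analysis --- it is an obstruction that your framework does not overcome. Two vertex-disjoint cycles of length at least $3$ joined by a single edge cannot in general be replaced by one cycle on their union, and the hypothesis you extract from the failure of the representative argument is only that \emph{every} transversal of the parts contains an adjacent pair. That hypothesis gives you one forced edge between two parts for each choice of representatives, but nothing like the ``two consecutive bridge-neighbours'' or ``crossing pair of edges'' that your merging moves require; finding an independent transversal under adjacency constraints is in general a delicate matter (compare Haxell-type theorems, which need degree hypotheses), so there is no reason to believe the secondary minimisation rescues the argument. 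As written, the proof does not close.

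The paper itself only cites this lemma (Problem 8.3 in Lov\'asz), but the standard argument is short and avoids merging entirely: induct on $\alpha$. Take a longest path $P = v_1 v_2 \cdots v_k$. Every neighbour of $v_1$ lies on $P$, since otherwise $P$ could be extended. Let $j$ be the largest index with $v_1 v_j \in E(G)$ and let $C$ be the cycle $v_1 v_2 \cdots v_j v_1$ (an edge if $j=2$, the single vertex $v_1$ if $v_1$ is isolated). Then no vertex of $G - V(C)$ is adjacent to $v_1$, so any independent set of $G - V(C)$ extends by $v_1$ to an independent set of $G$; hence $\alpha(G - V(C)) \leq \alpha - 1$, and the induction hypothesis partitions $G - V(C)$ into at most $\alpha - 1$ cycles. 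Together with $C$ this gives at most $\alpha$ cycles. I would recommend replacing your extremal scheme with this peeling argument.
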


It will be useful to introduce some notation. Given a vertex $x$ and a colour $c$, we let $N_c(x)$ be the neighbourhood of $x$ in colour $c$. In the following proof, we will have a collection of colours $c_1, c_2, \dots, c_r$ and we will simply write $N_i(x)$ for $N_{c_i}(x)$.

\begin{lemma} \label{lem:patching}
Suppose that $A$ and $B$ are vertex sets with $|B| \leq |A|/r^{r+3}$ and the edges of the complete bipartite graph between $A$ and $B$ are $r$-locally coloured. Then all vertices of $B$ can be covered with at most $r^2$ disjoint monochromatic cycles.
\end{lemma}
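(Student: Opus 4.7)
The plan is to mimic the argument of Erdős, Gyárfás and Pyber for ordinary $r$-coloured bipartite graphs, combining averaging with Lemma~\ref{lem:Posa} and inducting on $r$. The base case $r=1$ is immediate, since a $1$-locally coloured complete bipartite graph is necessarily monochromatic, and so $B$ can be covered by a single Hamilton cycle using $|B| \le |A|$ connector vertices from $A$.

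For the inductive step, I first locate a colour that dominates on a non-trivial portion of $B$. For each $b \in B$ pigeonhole on $P(b)$ gives a colour $c(b)$ with $|N_{c(b)}(b) \cap A| \ge |A|/r$. Since $\sum_b |N_{c(b)}(b) \cap A| \ge |A||B|/r$, some $a^* \in A$ satisfies $|\{b : a^* \in N_{c(b)}(b)\}| \ge |B|/r$, and because $|P(a^*)| \le r$ a second pigeonhole produces a colour $c^* \in P(a^*)$ so that the set $B^*$ of $c^*$-heavy vertices (those $b$ with $|N_{c^*}(b) \cap A| \ge |A|/r$) has $|B^*| \ge |B|/r^2$.

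Next I cover $B^*$ by few $c^*$-monochromatic cycles via Pósa. Let $H$ be the graph on $B^*$ with $b \sim b'$ whenever $|N_{c^*}(b) \cap N_{c^*}(b') \cap A| \ge t$, for $t := |A|/r^3$. Writing $d_S(a) := |\{b \in S : a \in N_{c^*}(b)\}|$ for $S \subseteq B^*$, one has $\sum_a d_S(a) \ge |S||A|/r$, so Cauchy--Schwarz yields $\sum_a d_S(a)^2 \ge |S|^2|A|/r^2$, whence
\[
\sum_{\{b,b'\} \subseteq S,\, b \ne b'} |N_{c^*}(b) \cap N_{c^*}(b') \cap A| \;\ge\; \frac{|S|\,|A|\,(|S|-r)}{2r^2}.
\]
If $S$ were independent in $H$, this sum would be less than $\binom{|S|}{2} t$, and comparing the two bounds forces $|S| \le r$. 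Thus $\alpha(H) \le r$, and Lemma~\ref{lem:Posa} partitions $B^*$ into at most $r$ cycles in $H$. Each such cycle lifts to a $c^*$-monochromatic cycle in $K(A,B)$ by greedy choice of pairwise distinct connectors $a_i \in N_{c^*}(b_i) \cap N_{c^*}(b_{i+1}) \cap A$, which succeeds because $t = |A|/r^3$ is comfortably larger than the total number $|B^*| \le |B|$ of connectors ever needed.

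Finally I iterate via induction. Let $B' := B \setminus B^*$, so every $b \in B'$ has $|N_{c^*}(b) \cap A| < |A|/r$. Remove from $A$ the at most $|B|$ connector vertices used above together with $\bigcup_{b \in B'} N_{c^*}(b)$ and, if necessary, those $a$ for which $c^* \notin P(a)$, leaving a subset $A''$ on which the induced colouring on $K(A'', B')$ is $(r-1)$-local (no $b \in B'$ sends a $c^*$-edge to $A''$, and every remaining $a \in A''$ already has $c^*$ in its full palette so its induced palette on $B'$ has at most $r-1$ colours). A direct arithmetic check using $|A| \ge r^{r+3}|B|$ shows $|A''| \ge (r-1)^{r+2}|B'|$, so the inductive hypothesis covers $B'$ with $(r-1)^2$ further cycles, for a grand total of at most $r + (r-1)^2 = r^2 - r + 1 \le r^2$. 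The main obstacle is the bookkeeping in this third step: one must simultaneously eliminate the colour $c^*$ from the effective palettes of every vertex in the restricted bipartite graph and retain enough $A$-side vertices to satisfy the ratio required by the inductive hypothesis. The exponential bound $r^{r+3}$ is tailored precisely to absorb these successive deletions.
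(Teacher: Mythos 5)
Your base case, the double pigeonhole producing $c^*$ and $B^*$, and the P\'osa/Cauchy--Schwarz covering of $B^*$ by at most $r$ cycles of colour $c^*$ are all sound (substituting the lower bound $|S|\,|A|/r$ for $\sum_a d_S(a)$ inside $\tfrac12\left(\sum_a d_S(a)^2 - \sum_a d_S(a)\right)$ is legitimate only because $x \mapsto x^2/|A| - x$ is increasing for $x \ge |A|/2$ and $|S| = r+1$ puts you in that range -- worth saying explicitly). The inductive step, however, has a genuine gap, in fact two. First, the colouring of $K(A'',B')$ need not be $(r-1)$-local: a vertex $b \in B'$ whose palette never contained $c^*$ in the first place still sees up to $r$ colours on $A''$. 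In a local colouring there is no single colour guaranteed to appear at every vertex of $B$ -- the total number of colours is unbounded, which is exactly what distinguishes this problem from the globally $r$-coloured one -- and you verify the palette reduction only on the $A$-side. Second, the arithmetic fails: you delete $\bigcup_{b \in B'} N_{c^*}(b)$ from $A$, but your only control is $|N_{c^*}(b) \cap A| < |A|/r$ for each individual $b$, so this union can have size close to $|B'|\,|A|/r$, i.e.\ essentially all of $A$ once $|B'| \ge r$; moreover $\{a : c^* \in P(a)\}$ is only guaranteed to have size about $|A|/r$. After these deletions $A''$ can be empty, so no choice of the constant $r^{r+3}$ rescues the ratio $|A''| \ge (r-1)^{r+2}|B'|$.

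The missing idea is to avoid peeling off one colour at a time. The paper instead builds a nested sequence $A \supseteq A_1 \supseteq A_2 \supseteq \cdots$ with $A_{i+1} = A_i \cap N_{c_{i+1}}(b_{i+1})$ for a vertex $b_{i+1} \in B$ and a \emph{new} colour $c_{i+1}$ that is heavy on $A_i$; if no such pair exists, every $b \in B$ is already heavy on $A_i$ in some $c_j$ with $j \le i$, and $B$ is partitioned accordingly. The process cannot pass step $r$: every vertex of $A_r$ sees all of $c_1, \dots, c_r$ (one colour to each $b_j$), so $r$-locality forces \emph{every} edge from $A_r$ to $B$ into this palette, and pigeonhole then partitions all of $B$ into at most $r$ heavy classes over the single set $A' = A_r$ with $|A'| \ge |A|/r^r$. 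Your P\'osa argument applied to each class separately then gives $r \cdot r = r^2$ cycles, with no induction and no union bound. You should restructure the proof along these lines.
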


\begin{proof}
Fix a vertex $b_1$ in $B$. Since $b_1$ is incident to at most $r$ colours, there exists a colour $c_1$ such that the neighbourhood $N_1(b_1)$ of $b_1$ in colour $c_1$ has size at least $|A|/r$. We let $A_1 = N_1(b_1)$. 

Suppose now that $b_1, b_2, \dots, b_i$ are distinct vertices in $B$, $c_1, c_2, \dots, c_i$ are distinct colours and that $A_i = \cap_{j=1}^i N_j(b_j)$ satisfies $|A_i| \geq |A|/r^i$. If, for all $b \in B \setminus \{b_1, \dots, b_i\}$, there exists a colour $c(b) \in \{c_1, c_2, \dots, c_i\}$ such that 
\[|A_i \cap N_{c(b)}(b)| \geq \frac{|A_i|}{r},\]
then, for each $j = 1, 2, \dots, i$, we let $B_j = \{b : c(b) = c_j\}$ and $A' = A_i$. Otherwise, there exists a vertex $b_{i+1} \in B\setminus \{b_1, b_2, \dots, b_i\}$ and a colour $c_{i+1} \not\in \{c_1, c_2, \dots, c_i\}$ such that $|A_i \cap N_{i+1}(b_{i+1})| \geq \frac{|A_i|}{r}$. Letting 
\[A_{i+1} = A_i \cap N_{i+1}(b_{i+1}) = \cap_{j=1}^{i+1} N_j(b_j),\]
we see that $|A_{i+1}| \geq |A|/r^{i+1}$.

Suppose now that the process continues until we have defined a set $A_r = \cap_{j=1}^r N_j(b_j)$ with $|A_r| \geq |A|/r^r$. Since every vertex in $A_r$ is adjacent to $r$ different colours and the graph is $r$-locally coloured, we must have that all edges between $A_r$ and $B$ are coloured in $c_1, c_2, \dots, c_r$. If this is the case, then, for each vertex $b \in B$, there exists a colour $c(b) \in \{c_1, c_2, \dots, c_r\}$ such that $|A_r \cap N_{c(b)}(b)| \geq |A_r|/r$. For each $j = 1, 2, \dots, r$, we let $B_j = \{b : c(b) = c_j\}$ and $A' = A_r$.

We may now assume that we have a subset $A'$ of $A$ with $|A'|\geq |A|/r^r$ and a partition of $B$ into pieces $B_1, B_2, \dots, B_r$ (some of which may be empty) such that, for each $j = 1, 2, \dots, r$, every vertex $b \in B_j$ is adjacent to at least $|A'|/r$ vertices of $A'$ in colour $c_j$. We define a graph $G_j$ on vertex set $B_j$ by joining $x, y \in B_j$ if and only if 
\[|A' \cap N_j(x) \cap N_j(y)| \geq |A'|/r^3.\]
We claim that the graph $G_j$ contains no independent set of order $r + 1$. Suppose, for the sake of contradiction, that $x_1, x_2, \dots, x_{r+1} \in B_j$ are the vertices of an independent set of order $r+1$. Then
\begin{align*}
|A'| & \geq \left|\cup_{k=1}^{r+1} \left(A' \cap N_j(x_k)\right)\right| \\
& \geq (r+1) \cdot \frac{|A'|}{r} - \sum_{1 \leq k < \ell \leq r+1} |A' \cap N_j(x_k) \cap N_j(x_{\ell})|\\
& \geq |A'|\left(\frac{r+1}{r} - \frac{\binom{r+1}{2}}{r^3}\right) > |A'|,
\end{align*}
a contradiction. By Lemma \ref{lem:Posa}, it follows that the vertex set $B_j$ can be partitioned into at most $r$ disjoint cycles from $G_j$. Using the definition of $G_j$ and the fact that $|A' \cap N_j(x) \cap N_j(y)| \geq |A'|/r^3 \geq |B|$ for any two adjacent $x,y$ from $B_j$, it is now easy to conclude that the vertices of $B$ may be covered using at most $r^2$ disjoint monochromatic cycles.
\end{proof}

The {\it $r$-colour Ramsey number} of a graph $H$, denoted $R_r(H)$, is the smallest $n$ such that in any $r$-colouring of the edges of $K_n$ there is guaranteed to be a monochromatic copy of $H$. The local analogue of this concept, known as the {\it $r$-local Ramsey number} and denoted $R_{r-loc}(H)$, is the smallest $n$ such that in any $r$-local colouring of the edges of $K_n$ there is guaranteed to be a monochromatic copy of $H$. That the local Ramsey number exists was first proved by Gy\'arf\'as, Lehel, Schelp and Tuza~\cite{GLST87}. We will need the following result of Truszczynski and Tuza~\cite{TT87}, which says that for connected graphs the ratio of $R_{r-loc}(H)$ and $R_r(H)$ is bounded in terms of $r$. 

\begin{lemma} \label{lem:localtoglobal}
For any connected graph $H$,
\[\frac{R_{r-loc}(H)}{R_r(H)} \leq \frac{r^r}{r!} \leq e^r.\]
\end{lemma}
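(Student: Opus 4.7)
The plan is to reduce the $r$-local Ramsey problem to the ordinary $r$-colour Ramsey problem by finding, inside any $r$-local colouring of a sufficiently large $K_N$, a vertex subset whose induced colouring uses only $r$ distinct colours globally. Specifically, the goal is to produce a subset $V' \subseteq V(K_N)$ with $|V'| \geq R_r(H)$ on which the induced colouring uses at most $r$ colours; applying the ordinary $r$-colour Ramsey bound to $K_N[V']$ then yields the desired monochromatic copy of $H$.

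The natural randomisation is as follows. At each vertex $v$, extend the incident palette $C(v)$ to a set $\bar C(v)$ of exactly $r$ colours by padding with fresh symbols that appear nowhere else, and independently pick a uniformly random bijection $\pi_v \colon \bar C(v) \to [r]$. Call an edge $uv$ of colour $c$ \emph{aligned} if $\pi_u(c) = \pi_v(c)$; aligned edges then carry a well-defined label in $[r]$. Crucially, since each $\pi_v$ is a bijection on $\bar C(v)$ and hence injective on $C(v)$, any connected, label-monochromatic subgraph built from aligned edges is automatically monochromatic in the original colouring --- this is precisely where the connectedness hypothesis on $H$ enters.

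The remaining task is to extract, via an averaging argument over the random bijections $\pi_v$, a subset $V' \subseteq V(K_N)$ of size at least $N \cdot r!/r^r$ on which every induced edge is aligned. Once this is achieved, choosing $N = \lceil (r^r/r!) R_r(H) \rceil$ forces $|V'| \geq R_r(H)$; the ordinary Ramsey bound then produces a label-monochromatic copy of $H$ inside $K_N[V']$, which by the injectivity and connectedness observation above pulls back to a colour-monochromatic copy of $H$ in the original $r$-local colouring.

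The main obstacle is obtaining the sharp factor $r!/r^r$ in the averaging step. A naive per-edge alignment estimate gives probability $1/r$, which compounded independently across the $\binom{|V'|}{2}$ edges of $V'$ is hopelessly too weak. The correct bound must exploit that alignment is fundamentally a per-vertex constraint: at each vertex there are $r!$ valid bijections out of $r^r$ possible $[r]$-labelings of the $r$ slots, so one should average or select at vertex-level rather than edge-level --- for instance via an iterative construction that commits to one bijection at a time and discards incompatible future vertices, keeping the surviving fraction at each step bounded below by $r!/r^r$.
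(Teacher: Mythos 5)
The paper does not actually prove this lemma: it is imported as a black box from Truszczynski and Tuza \cite{TT87}, so your argument has to stand on its own --- and it does not, because the one step carrying all of the content, namely producing a set $V'$ of size at least $(r!/r^r)N$ on which every edge is aligned, is exactly the step you leave open. Neither of the routes you gesture at closes it. For the averaging route, note that with independent uniform bijections two vertex-disjoint edges align independently, each with probability $1/r$, and for a candidate set $S$ each newly added vertex must align with at least one earlier one, so the probability that a fixed linear-sized $S$ is all-aligned is at most $r^{-(|S|-1)}$; nothing in the proposal shows that some linear-sized $S$ nevertheless survives. Worse, the randomisation is at the wrong level: already for an honest $r$-colouring whose colour classes are well connected, an all-aligned set on which every colour class is connected forces all of its vertices to carry the \emph{same} bijection, so the random choice concentrates the aligned structure on the roughly $N/r!$ vertices sharing a common $\pi_v$ --- and $N/r!<(r!/r^r)N$ for every $r\geq 3$ since $(r!)^2>r^r$ there. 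The greedy route breaks for a different reason: whether a later vertex $u$ can be aligned with the committed set $S$ is a constraint involving all of $S$ (two vertices of $S$ joined to $u$ in the same colour $c$ need not agree on the label of $c$, since nothing forces agreement when the edge between them has another colour), so there is no per-step survival ratio whose product is $r!/r^r$.

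The repair is to randomise over colours rather than over vertices. Let $\phi$ assign to each colour of the entire colouring an independent uniform label in $[r]$, and call a vertex \emph{good} if $\phi$ is injective on the at most $r$ colours it sees. Each vertex is good with probability at least $r(r-1)\cdots 1/r^{r}=r!/r^{r}$, so by linearity of expectation some fixed $\phi$ makes at least $(r!/r^{r})N\geq R_r(H)$ vertices good. On the good vertices $\phi$ induces an honest $r$-colouring with no alignment condition left to verify, and injectivity of $\phi$ on each good vertex's palette together with the connectedness of $H$ pulls a label-monochromatic copy of $H$ back to a genuinely monochromatic one, exactly as in your third paragraph. So your reduction scheme, your use of connectedness, and even the target density are all correct; the global relabelling is what makes that density appear for free, whereas the per-vertex bijections manufacture an alignment problem that the proposal cannot overcome.
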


The following lemma generalises a result of Bollob\'as, Kostochka and Schelp \cite{BKS00}. For a class of graphs $\mathcal{H}$, the {\it $r$-local Ramsey number} $R_{r-loc}(\mathcal{H})$ is the smallest $n$ such that in any $r$-local colouring of the edges of $K_n$ there is guaranteed to be a monochromatic copy of some graph $H \in \mathcal{H}$.

\begin{lemma} \label{lem:denselocal}
Suppose that $\mathcal{H}$ is a class of graphs and $c$ and $\epsilon$ are positive constants such that for all $n$ any graph on $n$ vertices with at least $c n^{2 - \epsilon}$ edges contains a graph from $\mathcal{H}$. Then
\[R_{r-loc}(\mathcal{H}) \leq (4 c r)^{1/\epsilon}.\]
\end{lemma}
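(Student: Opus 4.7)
The plan is to argue by contradiction. Set $n=\lceil (4cr)^{1/\epsilon}\rceil$, and suppose the edges of $K_n$ carry an $r$-local colouring with no monochromatic copy of any $H\in\mathcal H$. For each colour $\gamma$ that is used, write $V_\gamma$ for the set of vertices incident to at least one edge of colour $\gamma$ and $E_\gamma$ for the set of edges of colour $\gamma$. Two identities are in play: since each vertex lies in at most $r$ of the sets $V_\gamma$, we have $\sum_\gamma |V_\gamma|\leq rn$, and of course $\sum_\gamma |E_\gamma|=\binom{n}{2}$. The assumption on $\mathcal H$ forces $|E_\gamma|<c|V_\gamma|^{2-\epsilon}$ for every $\gamma$, since otherwise the subgraph of colour $\gamma$, viewed as a graph on $|V_\gamma|$ vertices, would already contain a member of $\mathcal H$.

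The rest is bookkeeping. Using the trivial bound $|V_\gamma|\leq n$ to rewrite $|V_\gamma|^{2-\epsilon}\leq n^{1-\epsilon}|V_\gamma|$ (valid for $\epsilon\leq 1$) and summing,
\[
\binom{n}{2}=\sum_\gamma |E_\gamma|<c\sum_\gamma |V_\gamma|^{2-\epsilon}\leq cn^{1-\epsilon}\sum_\gamma |V_\gamma|\leq crn^{2-\epsilon}.
\]
Combining this with $\binom{n}{2}\geq n^2/4$ (say for $n\geq 2$) gives $n^{\epsilon}<4cr$, contradicting the choice of $n$. In the less interesting range $\epsilon>1$ one instead uses the cruder estimate $|V_\gamma|^{2-\epsilon}\leq |V_\gamma|$, which yields the even stronger bound $n<4cr$, so the claimed inequality holds for all $\epsilon>0$.

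The substantive step is isolating the right two-sided count: the local-colouring hypothesis enters only through $\sum_\gamma|V_\gamma|\leq rn$, and once this is paired with the per-colour extremal bound coming from $\mathcal H$, the rest reduces to a one-line H\"older-type estimate. I do not foresee any genuine obstacle, as the argument closely parallels (and slightly strengthens) the standard averaging used in the globally-coloured Bollob\'as--Kostochka--Schelp setting, the only novelty being the replacement of the trivial count "some colour has $\geq \binom{n}{2}/r$ edges" by the sharper global inequality $\sum_\gamma|V_\gamma|\leq rn$.
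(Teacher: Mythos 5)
Your proposal is correct and follows essentially the same argument as the paper: the double count $\binom{n}{2}=\sum_\gamma|E_\gamma|<c\sum_\gamma|V_\gamma|^{2-\epsilon}\leq cn^{1-\epsilon}\sum_\gamma|V_\gamma|\leq crn^{2-\epsilon}$ combined with $\binom{n}{2}\geq n^2/4$ is exactly the paper's computation, with your $|V_\gamma|^{2-\epsilon}\leq n^{1-\epsilon}|V_\gamma|$ playing the role of the paper's factor $\max_i v_i^{1-\epsilon}\leq n^{1-\epsilon}$. Your explicit remark about the case $\epsilon>1$ is a small tidiness the paper omits, but it does not change the substance.
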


\begin{proof}
Suppose that the edges of $K_n$ have been $r$-locally coloured with at most $s$ colours, which we may assume are $\{1, 2, \dots, s\}$. For each $i = 1, 2, \dots, s$, let $e_i$ be the number of edges in colour $i$ and $v_i$ the number of vertices which are incident with an edge of colour $i$. If there is no $H \in \mathcal{H}$ in colour $i$, then $e_i < c v_i^{2 - \epsilon}$ for each $i = 1, 2, \dots, s$. Since also $\sum_{i=1}^s v_i \leq r n$, we have
\[\frac{n^2}{4} \leq \binom{n}{2} = \sum_{i=1}^s e_i < c \sum_{i=1}^s v_i^{2 - \epsilon} \leq c (\max_{1 \leq i \leq s} v_i^{1-\epsilon}) \sum_{i=1}^s v_i \leq c r n^{2 - \epsilon}.\]
This implies that $n < (4 c r)^{1/\epsilon}$ and the result follows.
\end{proof}

We will only need the following corollary.

\begin{corollary} \label{cor:longcycles}
Let $\mathcal{C}_\ell$ be the collection of all cycles of length at least $\ell$. Then
\[R_{r-loc}(\mathcal{C}_\ell) \leq 2 \ell r.\]
\end{corollary}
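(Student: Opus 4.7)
The plan is to derive the corollary as an immediate application of Lemma~\ref{lem:denselocal}, so the only real task is to exhibit constants $c$ and $\epsilon$ that witness the extremal hypothesis for $\mathcal{C}_\ell$. For that, I would appeal to the classical theorem of Erd\H{o}s and Gallai: any graph on $n$ vertices with more than $\frac{(\ell-1)(n-1)}{2}$ edges contains a cycle of length at least $\ell$. In particular, any graph on $n$ vertices with at least $\frac{(\ell-1)n}{2}$ edges contains such a cycle (since $\tfrac{(\ell-1)n}{2} > \tfrac{(\ell-1)(n-1)}{2}$). So one may take $c = (\ell-1)/2$ and $\epsilon = 1$ in Lemma~\ref{lem:denselocal}.

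Plugging these into the conclusion of Lemma~\ref{lem:denselocal} gives
\[
R_{r-loc}(\mathcal{C}_\ell) \leq (4cr)^{1/\epsilon} = 4 \cdot \tfrac{\ell-1}{2} \cdot r = 2(\ell-1)r \leq 2\ell r,
\]
which is exactly the bound claimed. There is really no obstacle here; the only subtlety worth flagging explicitly in the write-up is that we need the non-strict edge bound $e(G) \geq \frac{(\ell-1)n}{2}$ (matching the form of Lemma~\ref{lem:denselocal}) rather than the strict one that comes directly out of Erd\H{o}s--Gallai, and the trivial inequality above accounts for this. Since Erd\H{o}s--Gallai is a standard black box, the whole argument should fit in a couple of lines.
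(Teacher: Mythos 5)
Your proof is correct and follows essentially the same route as the paper: both apply Lemma~\ref{lem:denselocal} with $\epsilon = 1$ and a constant supplied by the Erd\H{o}s--Gallai theorem. The only difference is cosmetic: you take $c = (\ell-1)/2$ where the paper takes $c = \ell/2$, so you get the marginally sharper bound $2(\ell-1)r$ before relaxing it to $2\ell r$.
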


\begin{proof}
A classical result of Erd\H{o}s and Gallai \cite{EG59} shows that if a graph on $n$ vertices contains at least $\ell n/2$ edges, then it contains a cycle of length at least $\ell$. The result then follows from applying Lemma~\ref{lem:denselocal} with $\epsilon = 1$ and $c = \ell/2$.
\end{proof}

For any natural number $k$, we define the triangle cycle $T_k$ to be the graph with vertex set $\{u_1, u_2, \dots, u_k\} \cup \{v_1, v_2, \dots, v_k\}$, where $u_1, u_2, \dots, u_k$ form a cycle of length $k$ and $v_i$ is joined to $u_i$ and $u_{i+1}$ (with addition taken modulo $k$). That is, as the name suggests, we have a cycle formed from triangles. An important property of these graphs is that we can remove any subset of $\{v_1, v_2, \dots, v_k\}$ and still find a cycle through all of the remaining vertices. The final ingredient we will need is a straightforward lemma of Erd\H{o}s, Gy\'arf\'as and Pyber \cite{EGP91} about the Ramsey number of these triangle cycles.

\begin{lemma} \label{lem:tricycle}
\[R_{r}(T_k) = O(k r^{3r}).\]
\end{lemma}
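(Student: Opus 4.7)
The plan is to locate a large monochromatic substructure inside any $r$-coloured $K_n$ of the stated size and then embed $T_k$ into it one triangle at a time. First, starting with the $r$-coloured complete graph on $n = Ckr^{3r}$ vertices, I would iteratively prune neighbourhoods: choose a vertex $w_1$ and a colour $c_1$ for which $|N_{c_1}(w_1)|$ is maximum (and hence at least $(n-1)/r$); then choose $w_2$ from this neighbourhood and a colour $c_2$ maximising $|N_{c_2}(w_2)|$ within it, and so on. Each step costs a factor of $r$ in the size of the descendant set, so after about $3r$ steps by pigeonhole some colour $c$ has been chosen at least three times, and the descendant set still has size at least $Ck$. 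The three pivot vertices in colour $c$, together with this large descendant set, form a configuration in which every pair of pivots has many common colour-$c$ neighbours.

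Next, I would embed $T_k$ greedily into this configuration. Fix two of the pivot vertices as $u_1$ and $u_2$ and the third as $v_1$, forming the initial triangle in colour $c$. At step $i$, extend from $u_i$ by choosing an unused vertex $v_i$ in the common colour-$c$ neighbourhood of $u_i$ and an auxiliary pivot, and then an unused $u_{i+1}$ in the common colour-$c$ neighbourhood of $v_i$ and $u_i$; the abundance of common neighbours guarantees a fresh choice at every step. At the final step, a small subset of vertices near $u_1$, reserved at the beginning, is used to close the cycle back to $u_1$.

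The main obstacle is calibrating the pruning step so that the factor $r^{3r}$ arises cleanly. The need to pigeonhole three same-coloured pivots among $\sim 3r$ iterated neighbourhoods is what produces the $r^{3r}$ loss, and one must verify that the descendant set remains linear in $k$ after all this pruning. Closing the triangle cycle without clashing with previously-committed vertices is a secondary technical point, resolved by the vertex-reservation trick sketched above. An alternative route, which may end up being cleaner, is to replace the greedy embedding by finding a sufficiently large monochromatic complete tripartite graph and embedding $T_k$ into it by cyclically distributing the $u_i$'s among the three parts and placing each $v_i$ in the remaining part; this reduces the problem to bounding $R_r(K_{m,m,m})$ for $m = \Theta(k)$, which can be attacked by the same iterated pruning argument.
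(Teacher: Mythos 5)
The paper does not prove this lemma itself---it is quoted directly from Erd\H{o}s, Gy\'arf\'as and Pyber \cite{EGP91}---so your argument has to stand on its own, and it does not. The fatal step is the embedding. After the iterated pruning you have three pivots $w_1,w_2,w_3$, pairwise joined in colour $c$ and each joined in colour $c$ to every vertex of a descendant set $A$ of size $\Omega(k)$, but you have no information whatsoever about the colours of the edges \emph{inside} $A$. The colour-$c$ graph you are guaranteed is therefore contained in the join of a triangle with an independent set, and this graph does not contain $T_k$ for $k\geq 4$: the independence number of $T_k$ is exactly $k$ (each of the $k$ triangles $u_iu_{i+1}v_i$ meets an independent set in at most one vertex, and each $u_i$ lies in two of these triangles), so a copy of $T_k$ placing all but three of its $2k$ vertices into an independent set would need $2k-3\leq k$. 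Concretely, your greedy step ``choose an unused $u_{i+1}$ in the common colour-$c$ neighbourhood of $v_i$ and $u_i$'' presupposes that two generic vertices of $A$ have many common $c$-neighbours, whereas the pruning guarantees this only for pairs of \emph{pivots}; two vertices of $A$ are guaranteed only the three pivots as common $c$-neighbours, and the cycle edges $u_iu_{i+1}$, all of which must have colour $c$ and all but at most three of which lie inside $A$, are not guaranteed at all. Nor can more pivots save the argument: embedding $T_k$ into the join of a clique $K_s$ with an independent set forces $s\geq k$, so the pigeonhole would need $\Omega(k)$ repetitions of a colour and the pruning would cost a factor $r^{\Omega(k)}$ rather than $r^{3r}$.

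The alternative route is also not viable for a bound linear in $k$. It is true that $\chi(T_k)=3$, so $T_k\subseteq K_{m,m,m}$ with $m=\Theta(k)$, but $R_r(K_{m,m,m})\geq R_2(K_{m,m})>2^{m/2}$, since a random $2$-colouring of $K_n$ with $n<2^{m/2}$ contains no monochromatic $K_{m,m}$ in expectation; any reduction to complete multipartite graphs with parts of size $\Theta(k)$ is therefore doomed to give a bound exponential in $k$. What a correct proof must produce is a monochromatic structure in which $\Omega(k)$ vertices span a connected, dense graph in a single colour---for example, a colour $c$ and a cycle $u_1\cdots u_k$ of colour $c$ along which every consecutive pair $u_i,u_{i+1}$ has at least $2k$ common neighbours in colour $c$, after which the pendant vertices $v_i$ can indeed be assigned greedily as you describe. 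Obtaining that structure inside an $r$-coloured $K_{O(kr^{3r})}$ is the real content of the lemma, and it is not achieved by the pruning-plus-pigeonhole step, which controls only edges incident to the pivots.
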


We are now ready to prove Theorem~\ref{thm:rcolours}. 

\vspace{3mm}

\noindent
{\it Proof of Theorem~\ref{thm:rcolours}.}
Let $K_n$ be a complete graph whose edges have been $r$-locally coloured. Combining Lemmas~\ref{lem:localtoglobal} and~\ref{lem:tricycle}, we see that
\[R_{r-loc}(T_k) \leq e^r R_r(T_k) = O(k r^{4r}).\]
Therefore, there is a monochromatic triangle cycle $T_k$ with $k = \Omega(n/r^{4r})$ in our colouring of $K_n$. We let $A$ be the subset of this triangle cycle corresponding to the vertex set $\{v_1, v_2, \dots, v_k\}$, that is, the collection of vertices which are not on the shortest cycle. We now restrict our attention to the complete graph on the vertex set $V(K_n)\setminus V(T_k)$.

By Corollary~\ref{cor:longcycles}, any $r$-locally coloured $K_t$ contains a monochromatic cycle of length at least $t/2r$. Removing the vertices of this cycle leaves an $r$-locally coloured complete graph with at most $t(1 - 1/2r)$ vertices. If we start with the vertex set $V(K_n)\setminus V(T_k)$ and apply this observation $s$ times, we remove $s$ disjoint monochromatic cycles and leave an $r$-locally coloured complete graph with at most
\[n\left(1 - \frac{1}{2r}\right)^s \leq n e^{-s/2r}\]
vertices. Therefore, for $s = c r^2 \log r$ and $c$ sufficiently large, we see that the remaining set of vertices has size at most $|A|/r^{r+3}$, where we used that $|A| = k = \Omega(n/r^{4r})$. We let $B$ be this remaining set.

We now apply Lemma~\ref{lem:patching} to the bipartite graph between $A$ and $B$. This implies that there is a collection of at most $r^2$ disjoint monochromatic cycles which cover all vertices of $B$. Though we have deleted a vertex subset $A'$ of $A$, there is still a monochromatic cycle covering all of the vertices in the set $V(T_k) \setminus A'$. Altogether, we have partitioned the vertex set of $K_n$ using at most
\[cr^2 \log r + r^2 + 1 = O(r^2 \log r)\]
disjoint monochromatic cycles, completing the proof. \qed

\section{$2$-local colourings} \label{sec:2colours}

In this section we will prove Theorem~\ref{thm:2colours}, that the vertex set of any $2$-locally coloured complete graph may be partitioned into two disjoint monochromatic cycles of different colours. We begin with a simple corollary of Bessy and Thomass\'e's result that every $2$-coloured complete graph may be partitioned into two disjoint monochromatic cycles of different colours. Throughout this section, we will say that a colour $\alpha$ \emph{sees} a vertex $v$, and vice versa, if there is an edge of colour $\alpha$ incident with $v$.

\begin{lemma}\label{lem:1coverall}
Let $K_n$ be $2$-locally coloured such that there is a colour $\alpha$ which sees all vertices. Then there are two disjoint monochromatic cycles of different colours, one of these $\alpha$, that together cover all of $V(K_n)$.
\end{lemma}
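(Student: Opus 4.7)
The plan is to reduce the lemma to the Bessy--Thomass\'e theorem by recolouring. Define a new $2$-edge-colouring of $K_n$ by keeping the colour $\alpha$ as is and collapsing every non-$\alpha$ colour into a single new colour $\beta$. This is a genuinely $2$-coloured complete graph, so Bessy--Thomass\'e produces a partition of $V(K_n)$ into two disjoint monochromatic cycles of different colours, namely an $\alpha$-cycle $C_\alpha$ and a $\beta$-cycle $C_\beta$ (with the empty set allowed as a cycle).

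The cycle $C_\alpha$ uses only original $\alpha$-edges, so it is already monochromatic in $\alpha$ with respect to the original colouring. The crux is to argue that $C_\beta$ is also monochromatic in the original colouring, in some common colour $\beta^* \neq \alpha$. Every edge of $C_\beta$ is non-$\alpha$ in the original. For any two consecutive edges $uv$ and $vw$ of $C_\beta$, both are non-$\alpha$ and both are incident to $v$. Since $v$ sees $\alpha$ by hypothesis and the colouring is $2$-local, $v$ sees at most one non-$\alpha$ colour, so $uv$ and $vw$ must share that colour. Propagating this around $C_\beta$ shows that all its edges have a common colour $\beta^* \neq \alpha$, hence $C_\beta$ is monochromatic in $\beta^*$. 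The degenerate cases where $C_\beta$ is empty, a single vertex, or a single edge require no further argument.

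I do not expect a serious obstacle: the only real content is the observation that collapsing the non-$\alpha$ colours produces a valid $2$-colouring to which Bessy--Thomass\'e applies, and that the $2$-local assumption, combined with $\alpha$ seeing every vertex, automatically forces any cycle of non-$\alpha$ edges to be monochromatic. The output cycles $C_\alpha$ and $C_\beta$ then directly give the desired partition.
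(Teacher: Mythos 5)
Your proposal is correct and is essentially identical to the paper's proof: both collapse all non-$\alpha$ colours into a single colour $\beta$, apply Bessy--Thomass\'e, and observe that the $\beta$-cycle must be monochromatic in the original colouring because each vertex, seeing $\alpha$ and at most two colours in total, sees at most one non-$\alpha$ colour. Your propagation argument along consecutive edges of $C_\beta$ just spells out the step the paper states in one line.
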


\begin{proof}
Let $\beta$ be the union of all colours other than $\alpha$. Apply the result of Bessy and Thomass\'e to find two disjoint cycles covering all of $K_n$, one in colour $\alpha$, the other in colour $\beta$. However, the cycle in colour $\beta$ must be monochromatic in the original colouring, since every vertex sees at most one colour different from $\alpha$.
\end{proof}

We also note a slight strengthening of Lemma~\ref{lem:1coverall} which we will need in the next section to prove an extension of Theorem~\ref{thm:2colours} to mean colourings. In  this lemma, the colouring is not required to be a $2$-local colouring, in that we allow a single vertex $v$ to see more than two colours.

\begin{lemma}\label{lem:onemore}
Let the edges of $K_n$ be coloured in such a way that each vertex except possibly $v$ sees at most one colour other than $\alpha$. Then there are two disjoint monochromatic cycles of different colours, one of these $\alpha$, that together cover all of $V(K_n)$.
\end{lemma}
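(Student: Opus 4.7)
The plan is to reduce Lemma~\ref{lem:onemore} to the proof strategy of Lemma~\ref{lem:1coverall} by the same merging trick. Let $\beta$ denote the union of all colours other than $\alpha$; the edges of $K_n$ are then $2$-coloured by $\{\alpha,\beta\}$, and the Bessy--Thomass\'e theorem delivers disjoint cycles $C_\alpha$ and $C_\beta$, in colours $\alpha$ and $\beta$ respectively, that partition $V(K_n)$. Since $C_\alpha$ is trivially monochromatic in the original colouring as well, the task reduces to showing that $C_\beta$ is monochromatic in some single original non-$\alpha$ colour.

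If $v\notin V(C_\beta)$, every vertex on $C_\beta$ sees at most one colour other than $\alpha$, and monochromaticity follows exactly as in Lemma~\ref{lem:1coverall}. I therefore expect the main (and really only) obstacle to be the case $v\in V(C_\beta)$: here $v$ is permitted to be incident with non-$\alpha$ edges of many different colours, so a priori the two $C_\beta$-edges at $v$ could be in different original colours and $C_\beta$ need not be monochromatic at $v$.

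To overcome this I would propagate the colour of a single edge around $C_\beta$. The cases $|V(C_\beta)|\le 2$ are trivially monochromatic, so assume $C_\beta$ is of the form $v, u_1, u_2,\ldots, u_k, v$ with $k\ge 2$, and let $\gamma$ be the original colour of the edge $vu_1$. Since $u_1\ne v$, the vertex $u_1$ sees at most one non-$\alpha$ colour, which must therefore be $\gamma$; hence the edge $u_1u_2$, being a non-$\alpha$ edge incident to $u_1$, is also of colour $\gamma$. Iterating along the cycle forces every edge $u_iu_{i+1}$, and finally the closing edge $u_kv$, to be of colour $\gamma$, so $C_\beta$ is monochromatic in colour $\gamma$ in the original colouring. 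Together with $C_\alpha$ this yields two disjoint monochromatic cycles of different colours covering $V(K_n)$, one of them being $\alpha$, as required.
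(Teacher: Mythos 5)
Your proposal is correct and follows essentially the same route as the paper: merge the non-$\alpha$ colours into a single colour $\beta$, apply Bessy--Thomass\'e, and in the case $v \in V(C_\beta)$ propagate a single original colour around the $\beta$-cycle using the fact that every vertex other than $v$ sees at most one non-$\alpha$ colour. The paper's own proof is the same argument, phrased as the observation that the two cycle-edges at each $v_i$ with $i \geq 2$ must share a colour.
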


\begin{proof}
Let $\beta$ be the union of all colours other than $\alpha$. Apply the result of Bessy and Thomass\'e to find two disjoint cycles covering all of $K_n$, one in colour $\alpha$, the other in colour $\beta$. If the cycle of colour $\alpha$ contains $v$, we are done as in the previous lemma. If the cycle of colour $\beta$ contains $v$, then we may write the vertices of this cycle as $v_1, v_2, \dots, v_t$, where $v_1 : = v$ and $v_i v_{i+1}$ is an edge for all $i = 1, 2, \dots, t$ (with addition taken modulo $t$). But then, since each vertex $v_i$ with $2 \leq i \leq t$ sees at most one colour other than $\alpha$, we must have that the edges $v_{i-1} v_i$ and $v_i v_{i+1}$ have the same colour for all $2 \leq i \leq t$. This implies that the cycle is monochromatic, completing the proof.
\end{proof}

Suppose now that we have a complete bipartite graph between sets $A$ and $B$ and each of $A$ and $B$ contain paths. The following simple lemma gives a condition under which these paths may be combined into a spanning cycle on $A \cup B$. Note that for a path $P$, we let $|P|$ be the number of vertices on $P$. 

\begin{lemma}\label{lem:pathsBIP}
Let $G$ be a graph and let $A$ and $B$ be disjoint subsets of $V(G)$ such that $G[A,B]$ is complete. Let $P_A$ and $P_B$ be paths in $A$ and  $B$, respectively, with $P_A, P_B\neq\emptyset$. If $|B-P_B|\leq |A-P_A| \leq |B|-1$, then  $A\cup B$ has a spanning cycle.
\end{lemma}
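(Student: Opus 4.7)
The plan is to explicitly construct a Hamilton cycle on $A \cup B$ by chopping $P_B$ into consecutive sub-paths and weaving the vertices of $A \setminus P_A$ through these, together with the singletons of $B \setminus P_B$, using bipartite edges; the path $P_A$ is then attached as one final $A$-block.

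Set $\alpha = |A \setminus P_A|$, $\beta = |B \setminus P_B|$ and $q = |P_B|$, so that $|B| = q + \beta$. The hypothesis $|B - P_B| \le |A - P_A| \le |B| - 1$ translates to $1 \le \alpha - \beta + 1 \le q$. Letting $s = \alpha - \beta + 1$, I would split $P_B$ into $s$ consecutive nonempty sub-paths $Q_1, \ldots, Q_s$, which is possible precisely because $1 \le s \le q$. Combined with the $\beta$ vertices of $B \setminus P_B$, each a singleton path in $G[B]$, this yields exactly $s + \beta = \alpha + 1$ nonempty $B$-blocks, each a path in $G[B]$.

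Next, I would list these blocks in any order as $R_0, R_1, \ldots, R_\alpha$, list the vertices of $A \setminus P_A$ as $a'_1, \ldots, a'_\alpha$, and write $P_A = a_1 a_2 \cdots a_p$. The proposed cycle traverses the closed sequence
\[
a_1, a_2, \ldots, a_p, R_0, a'_1, R_1, a'_2, \ldots, a'_\alpha, R_\alpha, a_1,
\]
with each $R_i$ expanded along its underlying path. Consecutive pairs inside some $R_i$ are edges of $P_B$, consecutive pairs inside $a_1 \cdots a_p$ are edges of $P_A$, and every remaining consecutive pair (the two transitions joining $P_A$ to $R_0$ and $R_\alpha$, and the two transitions at each $a'_i$) runs between $A$ and $B$, and so is present by completeness of $G[A,B]$. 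This spans $A \cup B$ exactly once.

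There is no real obstacle here; the argument is a pure counting construction. The only point at which both halves of the hypothesis are needed is the splitting of $P_B$: $\alpha \ge \beta$ forces $s \ge 1$, so we split into at least one piece, while $\alpha \le |B| - 1$ forces $s \le q$, so we do not ask for more pieces than $P_B$ has vertices. Both inequalities in the hypothesis are therefore tight for this construction.
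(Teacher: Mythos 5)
Your construction is correct and is essentially the paper's own argument: both proofs weave the vertices of $A-P_A$ between pieces of $B$ using the completeness of $G[A,B]$, with the two inequalities playing exactly the roles you identify. The paper phrases it as extending $P_A$ by alternating between $B$ and $A$ (preferring $B-P_B$, then a prefix of $P_B$ in order) and closing up with the remaining suffix of $P_B$ \textemdash{} which is just the special case of your decomposition in which all but one of the sub-paths $Q_i$ are singletons.
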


\begin{proof}
We form a path by first following $P_A$ and then alternating between $B$ and $A$ until we cover all of $A$. While doing so, we prefer vertices of $B$ not covered by $P_B$ but if we have to use vertices from $P_B$ we use them in the order they lie on $P_B$. This gives a path $P$ of order $2|A|-|P_A|$, starting and ending in $A$. Since $|A-P_A| \leq |B|-1$, we know $P$ covers all of $A$ and, since $|B-P_B|\leq |A-P_A|$, the path $P$ also covers all of $B-P_B$. Furthermore, $P$ possibly covers some, but not all, of~$P_B$. We may therefore connect $P$ with the remains of $P_B$ to complete a cycle covering all of $A\cup B$.
\end{proof}

In the next lemma we generalize the situation above to three sets and two monochromatic cycles. 

\begin{lemma}\label{lem:pathsTRI}
Let $G$ be a graph whose edges are coloured with colours $1$ and $2$ and let $A_1, A_2$ and $B$ be disjoint subsets of $V(G)$. For $i=1,2$, let  $G[A_i,B]$ be complete in colour $i$, let $P_{A_i}\neq\emptyset$ be a path of colour~$i$ in~$A_i$ and let $P_B^i$  be a path of colour $i$ in $B$. If 
\begin{enumerate}[(a)]
\item $P_B^1$ and $P_B^2$ partition $B$, and\label{parti}
\item $ |A_1-P_{A_1}| + |A_2-P_{A_2}| +2\leq |B|,$\label{size}
\end{enumerate} 
then  $A_1\cup A_2\cup B$ has a partition into two monochromatic cycles, one of each colour.
\end{lemma}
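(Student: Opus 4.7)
The plan is to partition $B$ into two sets $B_1$ and $B_2$ so that, in the colour-$i$ subgraph of $G$, the set $A_i \cup B_i$ admits a Hamilton cycle. Any such cycle is automatically monochromatic in colour $i$, since $G[A_i, B]$ is complete in colour $i$ and $P_{A_i}$ is a colour-$i$ path. Writing $a_i := |A_i - P_{A_i}|$, hypothesis (b) becomes $|B| \geq a_1 + a_2 + 2$. For each $i$, Lemma~\ref{lem:pathsBIP} applied in the colour-$i$ subgraph to $(A_i, B_i)$, with internal paths $P_{A_i}$ and some colour-$i$ path $Q_i$ in $B_i$, produces the desired cycle provided $|B_i - Q_i| \leq a_i \leq |B_i| - 1$.

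The first attempt, $B_i := Q_i := P_B^i$, succeeds exactly when $|P_B^i| \geq a_i + 1$ for both $i$. If this fails, then without loss of generality $|P_B^1| \leq a_1$, so (b) forces $|P_B^2| \geq a_2 + 2$. Labelling $P_B^2 = b_1 b_2 \cdots b_t$, we transfer a prefix into $B_1$: set $s := a_1 + 1 - |P_B^1| \geq 1$, $B_1 := P_B^1 \cup \{b_1, \ldots, b_s\}$, and $B_2 := \{b_{s+1}, \ldots, b_t\}$. Then $|B_1| = a_1 + 1$ and $|B_2| = |B| - a_1 - 1 \geq a_2 + 1$, while $B_2$ remains a colour-$2$ subpath of $P_B^2$. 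Cycle $2$ now comes from Lemma~\ref{lem:pathsBIP} with $Q_2 := B_2$; cycle $1$ comes from the same lemma with $Q_1 := P_B^1$ whenever $P_B^1 \neq \emptyset$, the inequalities $|B_1 - P_B^1| = s \leq a_1$ and $a_1 = |B_1| - 1$ being immediate.

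The only real obstacle is the degenerate case $P_B^1 = \emptyset$, in which one cannot use $P_B^1$ as the internal $B$-path. Here one takes $Q_1 := \{b_1\}$, a single vertex, which is trivially a path; the size conditions of Lemma~\ref{lem:pathsBIP} still hold since $|B_1 - \{b_1\}| = a_1$. Inspecting the proof of Lemma~\ref{lem:pathsBIP} confirms that it goes through with $|Q_1| = 1$: its closing step simply joins the two ends of the alternating $A$-$B$ path to the single unused $B$-vertex via bipartite edges, which exist by completeness.
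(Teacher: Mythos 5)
Your proof is correct and takes essentially the same route as the paper's: both arguments repartition $B$ into $B_1,B_2$ by transferring a segment of the longer of the two paths $P_B^i$ across to the deficient side, verify the inequalities $|B_i-Q_i|\leq |A_i-P_{A_i}|\leq |B_i|-1$, and then apply Lemma~\ref{lem:pathsBIP} once in each colour. The only (cosmetic) differences are that the paper first perturbs the paths so that both $P_B^i$ are nonempty and then truncates the long path via a $\min$ formula, whereas you grow the short side by a prefix of the long path and dispose of the case $P_B^1=\emptyset$ with a singleton internal path; the two verifications are mirror images of one another.
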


\begin{proof}
First of all, note that we may assume that the paths $P_B^i$ are non-empty. Otherwise, since $|P_B^1|+|P_B^2|=|B|\geq 2$ by~\eqref{parti} and~\eqref{size}, we can slightly modify our paths so that both are non-empty, without losing the conditions of the lemma.

Take a subpath $P_i$ of $P^i_B$ of order $\min\{|P^i_B|, |B|-|A_{3-i}-P_{A_{3-i}}|-1\}$ for  $i=1,2$. Note that $P_i\neq\emptyset$ since we assumed  $P_B^i\neq\emptyset$ and also $|B|-|A_{3-i}-P_{A_{3-i}}|-1\geq 1$ by~\eqref{size}. Observe, by~\eqref{parti} and~\eqref{size}, that we have $P_i=P^i_B$ for at least one of $i=1,2$. Otherwise, we would have $|P^i_B|\geq |B|-|A_{3-i}-P_{A_{3-i}}|$ for $i=1,2$ and thus $|B|=|P^1_B|+|P^2_B|\geq 2|B|-|A_{1}-P_{A_{1}}|-|A_{2}-P_{A_{2}}|\geq |B|+2$, a contradiction. Without loss of generality, we may therefore assume that $P_2 = P^2_B$. 

Clearly, either  $|P_1|= |B|-|A_{2}-P_{A_{2}}|-1$ or $P_1=P^1_B$ (or both). In the first case, we use~\eqref{size} to see that $|P_1|\geq |A_{1}-P_{A_{1}}|+1$. In the second case, we see that $P_2=P^2_B$ implies that $|P^2_B|\leq |B|-|A_{1}-P_{A_{1}}|-1$, by the choice of $P_2$. 
Thus, using~\eqref{parti}, we get $|P_1|=|P^1_B|=|B|-|P^2_B| \geq |A_{1}-P_{A_{1}}|+1$.
In either case, we obtain
\begin{equation}\label{P1}
|P_1|\geq |A_{1}-P_{A_{1}}|+1.
\end{equation}

Let $B_1:=V(P_1)$ and $B_2:=B-B_1$. Note that $ |B_i-P_i|\leq  |A_i-P_{A_i}| \leq |B_i|-1$ for $i=1,2$. The first inequality holds for $i=1$ by the definition of $B_1$ and for $i=2$ since $B_2-P_2=P_B^1-P_1$, which is either empty or has size exactly $|P_B^1|-|B|+|A_2-P_{A_2}|+1\leq |A_2-P_{A_2}|$. The second inequality holds for $i=1$  by the definition of $B_1$ and by~\eqref{P1}. It holds for $i=2$ since $B_2=B-P_1$ and $|P_1|\leq |B|-|A_2-P_{A_2}|-1$ by the choice of $P_1$.
We may therefore apply Lemma~\ref{lem:pathsBIP} separately to the pairs $A_i,B_i$ with paths $P_{A_i}$ and $P_i$ (recall that $P_i \neq \emptyset$). This gives the desired partition.
\end{proof}

The proof of Theorem~\ref{thm:2colours} splits into two cases, depending on whether or not there is a colour which sees every vertex. When there is a colour which sees every vertex, Lemma~\ref{lem:1coverall} gives the required result. When there is no such colour, it is easy to argue that the graph contains exactly $3$ colours and looks like the configuration in Figure \ref{fig:2colours}. The proof that configurations of this type may be coloured by two monochromatic cycles of different colours forms the core of our proof. While the proof of Lemma~\ref{lem:1coverall} relies crucially on the result of Bessy and Thomass\'e, we will only use the much simpler result of Gy\'arf\'as \cite{GG67, Gy83} that every $2$-coloured graph may be partitioned into two monochromatic paths of different colours to handle the remaining configurations.

\begin{figure}
  \centering
  \begin{tikzpicture}
    [scale=.6,
    V/.style = {draw, fill=white,circle, inner sep =
      0em, outer sep = .1em, minimum size=5em},
    b/.style={ball color = gray,circle,inner sep = 0,minimum
      size=2mm},
    e/.style={line width=.5mm,line cap = rect}
    ]
    \begin{scope}[shift={(10,0)}]
      
      \node (x) at (90:2){};
      \node (y) at (210:2){};
      \node (z) at (330:2){};
      
      \draw (x)--(y);
      \draw (y)--(z);
      \draw (z)--(x);
      
      \draw[circle,fill=white] (90:2) circle (2.4em) {};
      \draw[circle,fill=white] (210:2) circle (1.8em) {};
      \draw[circle,fill=white] (330:2) circle (2.1em) {};

      \node at (90:2) {$V_{1 2}$};
      \node at (210:2) {$V_{2 3}$};
      \node at (330:2) {$V_{1 3}$};
      
      \node at (150:1.5) {$2$};
      \node at (270:1.5) {$3$};
      \node at (30:1.5) {$1$};

    \end{scope}
    \end{tikzpicture}
    \caption{The special configuration in Theorem~\ref{thm:2colours}. All edges with an endpoint in $V_{ij}$ must have colour $i$ or colour $j$. In particular, all edges between $V_{i j}$ and $V_{i \ell}$ have colour $i$.} 
    \label{fig:2colours}
\end{figure}
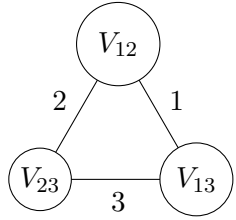

\vspace{3mm}
\noindent
{\it Proof of Theorem~\ref{thm:2colours}.}
Suppose that the edges of $K_n$ have been $2$-locally coloured. Let $S$ be the vertex set of a largest monochromatic connected subgraph, say in colour $1$. If colour $1$ sees all the vertices, then we are done by Lemma~\ref{lem:1coverall}. So $V_{23}:=K_n-S$ is non-empty.

 Let $x\in V_{23}$. Then, by the choice of $S$, we know that all edges between $x$ and $S$ receive at most two colours, both different from~$1$. Suppose these colours are $2$ and $3$. For $i = 2, 3$, let $V_{1i} = S \cap N_i(x)$. Note that these sets partition $S$ and are non-empty (otherwise $\{x\} \cup S$ is a larger monochromatic connected component). Moreover, $V_{1i}$ must contain only the colours $1$ and $i$ and the bipartite graph between $V_{12}$ and $V_{13}$ must be monochromatic in colour $1$. Since every vertex in $V_{1i}$ sees edges with colours $1$ and $i$, we also see that the bipartite graph between $V_{1i}$ and $V_{23}$ must be monochromatic in colour $i$. Finally, this implies that the set $V_{23}$ only contains edges of colours $2$ and $3$. This situation is illustrated in Figure~\ref{fig:2colours}.
 
If $|V_{23}| = 1$, we first ignore the single vertex $v$ in $V_{23}$ and apply Lemma~\ref{lem:1coverall} to find a partition of $V_{12} \cup V_{13}$ into two monochromatic cycles, one of colour $1$ and one of colour $i \in \{2, 3\}$. Since the second cycle will be contained entirely in $V_{1i}$ and all edges between $v$ and $V_{1i}$ are coloured $i$, it is straightforward to extend this cycle to include $v$. We may therefore assume that $|V_{23}| \geq 2$ and, similarly, that $|V_{12}| \geq 2$ and $|V_{13}| \geq 2$.

By Gy\'arf\'as' observation, we know that each $V_{ij}$ has a partition into two paths $R_{ij}^i$ and $R_{ij}^j$, the first of colour $i$ and order $r^i_{ij}$ and the second of colour $j$ and order $r^j_{ij}$ (the {\it order} of a path counts the number of vertices on it). 
We may assume that our graph does not admit a partition into two monochromatic cycles of different colours, as otherwise we would be done. If we apply Lemma~\ref{lem:pathsTRI} with $B = V_{j \ell}$, $A_j = V_{i j}$ and $A_\ell = V_{i \ell}$, then we see that for any choice of paths as above (six paths in total), we have $|V_{i j} - R_{ij}^j| + |V_{i \ell} - R_{i \ell}^\ell| + 1 \geq |V_{j \ell}|$ provided $r^j_{ij},r^\ell_{i\ell}>0$. That is, 
 \begin{equation}\label{length}
 \text{ if $r^j_{ij},r^\ell_{i\ell}>0$, then $r^i_{ij}+r^i_{i\ell} +1\geq  |V_{j\ell}|$}
 \end{equation}
  for any distinct $i,j,\ell\in\{1,2,3\}$. 
  
We now fix paths $P^i_{ij}$ and $P^j_{ij}$ partitioning $V_{ij}$ for each distinct $i,j\in\{1,2,3\}$ and set $p^i_{ij}:=|P^i_{ij}|$ and $p^j_{ij}:=|P^j_{ij}|$. Since $|V_{ij}| \geq 2$ for all $i,j\in\{1,2,3\}$, we may assume that each of our paths has at least one vertex. Slightly abusing notation by setting $p^i_{ij}:=p^i_{ji}$ whenever necessary, we clearly have
  \begin{equation}\label{sumpi}
 |V_{12}|+ |V_{13}|+ |V_{23}|= \sum_{i,j\in \{1,2,3\}, i< j}(p^i_{ij}+p^j_{ij})= \sum_{i,j,\ell\in \{1,2,3\}, \ell\neq i<j\neq \ell}(p^\ell_{i\ell}+p^\ell_{j\ell}).
  \end{equation}
  Hence, for at least two of the three sets $V_{ij}$, we have 
 $|V_{ij}|\geq p^\ell_{i\ell}+p^\ell_{j\ell}$, where $i\neq \ell\neq j$ (if this inequality was false for two of the sets, then the third set would violate~\eqref{length}). Without loss of generality, we will assume that
 \begin{equation}\label{V12V13}
  |V_{12}|\geq p^3_{13}+p^3_{23}\quad \text{ and }\quad  |V_{13}|\geq p^2_{12}+p^2_{23}.
  \end{equation}
 
 Now consider the edges between the endpoints of the paths $P^i_{ij}$ and $P^j_{ij}$ for distinct $i,j \in\{1,2,3\}$. These edges have colours in $\{i,j\}$. Therefore, there is always one of the two paths which can be made shorter by $1$ (possibly becoming empty), while augmenting the other path by $1$. Even more is true: if  one of the two paths, say $P^i_{ij}$, cannot be shortened\footnote{A bit incorrectly, we say a path $P^i_{ij}$ can/cannot be shortened by $x$ if the other path $P^j_{ij}$ can/cannot be augmented by $x$ using the endpoints of $P^i_{ij}$.} by~$1$, then  the other one, $P^j_{ij}$, either can be shortened by at least $2$ or is a one-vertex path. In the latter case, observe that $P^i_{ij}$ can be extended to a spanning cycle of $V_{ij}$.
  
Apply this reasoning to the paths $P^i_{23}$, $i=2,3$. By~\eqref{length}, neither of these paths may be shortened by $p^i_{1i}+p^i_{23} +2 -|V_{1(5-i)}|$ (note that this is either $1$ or $2$). Otherwise, $V_{23}$ would have a partition into two paths, with the path of colour $i$ having order $q_{23}^i = |V_{1(5-i)}| - 2 - p_{1i}^i$. Since  
 \[p_{1i}^i + q_{23}^i = |V_{1(5-i)}| - 2,\]
 this would contradict~\eqref{length}. If now each of the paths $P^i_{23}$, $i=2,3$, may be shortened by one, this implies that $p^i_{1i}+p^i_{23} +2 -|V_{1(5-i)}| \geq 2$ and so 
 $|V_{12}| = p^3_{13}+p^3_{23}$ and $|V_{13}|= p^2_{12}+p^2_{23}$, by~\eqref{V12V13}. Next, assume that one of the paths cannot be shortened by $1$. Hence, the other path, say $P_{23}^2$, can either be shortened by $2$ or consists of only one vertex. The first case cannot hold since then $p^2_{12}+p^2_{23} +2 -|V_{13}| \geq 3$ and so $|V_{13}| \leq p^2_{12} + p^2_{23} - 1$, contradicting~\eqref{V12V13}. In the second case, since we can shorten $P^2_{23}$ by one, we can again argue that $|V_{13}|= p^2_{12}+p^2_{23}$. Therefore, one of the following holds, after possibly swapping colours $2$ and $3$ for (b),
\begin{enumerate}[(a)]
\item  each of the paths $P^i_{23}$, $i=2,3$, may be shortened by one and, thus, $|V_{12}|= p^3_{13}+p^3_{23}$ and $|V_{13}|= p^2_{12}+p^2_{23}$,
\item $p^2_{23}=1$, $|V_{13}|= p^2_{12}+p^2_{23}$ and $V_{23}$ has a spanning cycle in colour $3$.
 \end{enumerate}
 
 In case~(a), note that by~\eqref{sumpi}, we also have $|V_{23}|= p^1_{12}+p^1_{13}$. In each of $V_{1i}$, $i=2,3$, one of the two paths $P^1_{1i}$, $P^i_{1i}$ can be shortened by $1$. However, by~\eqref{length}, not both $P^1_{12}$ and $P^1_{13}$ can be shortened by one. Thus, one of the $P^i_{1i}$, $i=2,3$, can be shortened by $1$. Since, by~(a), $P^i_{23}$ can also be shortened by one, this contradicts~\eqref{length}.
 
 So assume (b) holds. Then $|V_{13}|= p^2_{12}+p^2_{23}= p^2_{12}+1$. We now wish to apply Lemma~\ref{lem:pathsBIP} with $A:=V_{12}$, $B:=V_{13}$, $P_A:=P^1_{12}$ and $P_B$ any one-vertex path in $V_{13}$. This is possible since both paths are nonempty and 
 $$|A - P_A| = |V_{12}| - p_{12}^1 = p_{12}^2 = |V_{13}| - 1 = |B| - 1.$$
 Hence, there is a cycle in colour $1$ that covers all of $V_{12}\cup V_{13}$. Together with the spanning cycle of $V_{23}$ in colour~$3$, this gives the desired partition.
 \qed

\section{Concluding remarks} \label{sec:conclude}

\noindent
{\bf Constructing counterexamples.} The topic of this paper was originally motivated by an attempt to construct further counterexamples to the original conjecture. Concretely, suppose that one has an $(s-1)$-locally coloured complete graph containing $s$ colours in total and that at least $s$ monochromatic cycles are necessary to cover all vertices. It will also be useful to assume that this property is somewhat robust. For our purposes, it will be sufficient to know that $s$ monochromatic cycles are still needed to cover the graph whenever any vertex is deleted.\footnote{It is worth noting that Pokrovskiy's example \cite{P14} of a $3$-coloured complete graph which requires four monochromatic cycles is not robust in this sense, since one may cover all but one vertex with three monochromatic cycles.}

We now add an additional vertex $u$ to the graph. For each vertex $v$, we give $uv$ a colour which did not appear at $v$ in the original colouring. If we try to cover this new graph with monochromatic cycles, we see that $u$ must appear either on its own or as part of a single edge. If it occurs on its own, it is clear that $s+1$ monochromatic cycles are needed to cover the graph. If it appears as part of a single edge $uv$, we delete $u$ and $v$. But then the robustness property tells us that the graph that remains requires at least $s$ monochromatic cycles to cover all vertices, so we needed $s+1$ cycles in total.

Unfortunately, since we found no $(s-1)$-locally coloured complete graphs with the required properties, we could not use this technique to produce further counterexamples to the original conjecture. However, it may yet be a fruitful direction to consider. 

\vspace{3mm}
\noindent
{\bf The structure of $r$-local colourings.} When proving Theorem~\ref{thm:2colours}, we saw that we may split our deduction into two cases, depending on whether or not there was a colour which was seen by all vertices. The case where one colour sees all vertices followed easily as a corollary of the Bessy--Thomass\'e result, while the case where not all vertices are seen by one colour devolved into a special case which we had to study in depth.

As noted in \cite{GLST87}, a similar methodology can be applied to $r$-local colourings. Suppose that we have an $r$-locally coloured complete graph using $s$ colours in total, which we may assume to be $1, 2, \dots, s$. We form a hypergraph on the vertex set $\{1, 2, \dots, s\}$ by letting $\{c_1, c_2, \dots, c_i\}$ with $i \leq r$ be an edge if and only if there exists a vertex $v$ which sees precisely the colours $c_1, c_2, \dots, c_i$. Since any two vertices in our complete graph have an edge between them, any two edges in this hypergraph must intersect. A result of Erd\H{o}s and Lov\'asz \cite{EL75} now implies that either there are $r-1$ vertices such that every edge in the hypergraph contains at least one of these vertices or the hypergraph has at most $r^r$ edges. Translated back to the original setting, either there are $r-1$ colours such that every vertex sees at least one of these colours or every vertex sees one of at most $r^r$ different colour combinations.

For $r = 2$, this result again reduces to saying that either there is a colour which is seen by every vertex or we have a configuration of the type illustrated in Figure~\ref{fig:2colours}. For $r = 3$, it tells us that there are two colours, say $1$ and $2$, such that either every vertex sees at least one of $1$ and $2$ or every vertex sees one of at most $27$ different colour combinations (actually, as noted in \cite{GLST87}, this may be reduced to $10$). While this certainly gives us substantial extra information, a detailed analysis of monochromatic covers in $3$-local colourings is likely to be unwieldy. For example, one would have to analyse colourings corresponding to the Fano plane illustrated in Figure~\ref{fig:fano}. 

\begin{figure}
  \centering
  \begin{tikzpicture}
    [scale=.6,
    V/.style = {draw, fill=white,circle, inner sep =
      0em, outer sep = .1em, minimum size=5em},
    b/.style={ball color = gray,circle,inner sep = 0,minimum
      size=2mm},
    e/.style={line width=.5mm,line cap = rect}
    ]
    \begin{scope}[shift={(10,0)}]

      \node (a) at (90:3.5) {};
      \node (b) at (141.428:3.5) {};
      \node (c) at (192.857:3.5) {};
      \node (d) at (244.285:3.5) {};
      \node (e) at (295.714:3.5) {};
      \node (f) at (347.142:3.5) {};
      \node (g) at (38.571:3.5) {};

     \draw (a)--(b);
     \draw (a)--(c);
     \draw (a)--(d);
     \draw (a)--(e);
     \draw (a)--(f);
     \draw (a)--(g);
     \draw (c)--(b);
     \draw (d)--(b);
     \draw (e)--(b);
     \draw (f)--(b);
     \draw (g)--(b);
     \draw (c)--(d);
     \draw (c)--(e);
     \draw (c)--(f);
     \draw (c)--(g);
     \draw (d)--(e);
     \draw (d)--(f);
     \draw (d)--(g);
     \draw (e)--(f);
     \draw (e)--(g);
     \draw (f)--(g);

      \draw[circle,fill=white] (90:3.5) circle (2.5em) {};
      \draw[circle,fill=white] (141.428:3.5) circle (2.1em) {};
      \draw[circle,fill=white] (192.857:3.5) circle (1.9em) {};
      \draw[circle,fill=white] (244.285:3.5) circle (2.4em) {};
      \draw[circle,fill=white] (295.714:3.5) circle (1.8em) {};
      \draw[circle,fill=white] (347.142:3.5) circle (2.3em) {};
      \draw[circle,fill=white] (38.571:3.5) circle (2em) {};

      \node at (90:3.5) {$V_{1 3 7}$};
      \node at (141.428:3.5) {$V_{2 6 7}$};
      \node at (192.857:3.5) {$V_{1 5 6}$};
      \node at (244.285:3.5) {$V_{4 5 7}$};
      \node at (295.714:3.5) {$V_{3 4 6}$};
      \node at (347.142:3.5) {$V_{2 3 5}$};
      \node at (38.571:3.5) {$V_{1 2 4}$};
      
    \end{scope}
  \end{tikzpicture}
  \caption{A $3$-local colouring corresponding to the Fano plane. All edges with an endpoint in $V_{i j k}$ must have colour $i$, $j$ or $k$. In particular, all edges between $V_{i j k}$ and $V_{i \ell m}$ have colour $i$.}
  \label{fig:fano}
\end{figure}
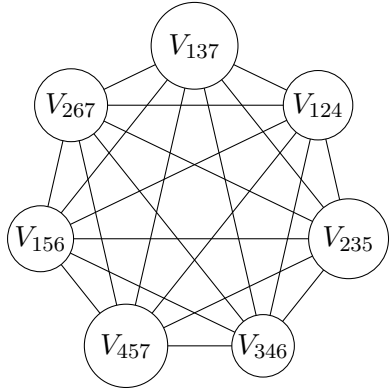

It might also be interesting to restrict the total number of colours. To give an example, a result of Pokrovskiy~\cite{P14} states that the vertex set of any $3$-coloured complete graph may be partitioned into at most three monochromatic paths. Perhaps one can prove that this remains true in any $3$-locally coloured complete graph containing at most $4$ colours.

\vspace{3mm}
\noindent
{\bf Mean colourings.}
An edge colouring of a graph is said to be an $r$-mean colouring if the average number of colours incident to any vertex is at most $r$. The $r$-mean Ramsey number of a graph $H$, denoted $R_{r-mean}(H)$, is the smallest $n$ such that in any $r$-mean colouring of $K_n$ there is guaranteed to be a monochromatic copy of $H$. This concept was introduced by Caro \cite{C92}, who also proved that $R_{r-mean}(H)$ exists for all $H$. The proof is quite simple: we find a large subset on which the colouring is an $(r+1)$-local colouring and then apply the existence of local Ramsey numbers.

Given the ease with which the concept of local Ramsey numbers generalises to mean Ramsey numbers, it is worth asking whether the vertex set of any $r$-mean coloured $K_n$ can be partitioned into a finite number of disjoint monochromatic cycles. We have been unable to resolve this question in general. However, for $r = 2$, we can again show that two monochromatic cycles of different colours are sufficient to cover all vertices of $K_n$. We now sketch the proof.

To begin, note that the set of vertices $V_1$ seeing exactly one colour is at least as large as the set of vertices $V_3$ seeing three or more colours. In particular, if $V_1$ is empty, then so is $V_3$ and Theorem~\ref{thm:2colours} applies. We may therefore assume that $V_1 \neq \emptyset$. Note that all vertices in $V_1$ must see the same colour, which we assume to be colour $1$. It is now straightforward to choose a cycle in colour $1$ which covers all vertices in $V_1 \cup V_3$. However, we will instead choose a cycle $C_1$ which covers all but one vertex $v$ of $V_3$ and which uses an internal edge $e$ in $V_1$ (unless $|V_1| = 1$, in which case the cycle is just a singleton). Consider now the set of vertices $V_2$ seeing exactly two colours. Since $V_1 \neq \emptyset$, every vertex in $V_2$ sees colour $1$. 
We may therefore apply Lemma~\ref{lem:onemore} to conclude that $V_2 \cup \{v\}$ may be covered by two monochromatic cycles of different colours, say $C'_1$ and $C_2$, with $C'_1$ of colour $1$. Using the edge $e$ (or the single vertex if $|C_1| = 1$), we may now combine $C_1$ and $C'_1$ into a cycle, completing the proof.

\vspace{3mm}
\noindent
{\bf Improving the bounds.}
We have proved that the vertex set of any $r$-locally coloured complete graph may be partitioned into $O(r^2 \log r)$ monochromatic cycles. It would be interesting to know whether these bounds can be substantially improved. While we cannot hope that $r$ cycles are always enough, it seems plausible that $O(r)$ is. It would already be interesting to bring the bounds in line with the $O(r \log r)$ bound of Gy\'arf\'as, Ruszink\'o, S\'ark\"ozy and Szemer\'edi \cite{GRSS06}.

\end{document}